\newtheorem{theorem}{Theorem}[section]
\newtheorem{lemma}[theorem]{Lemma}
\newtheorem{proposition}[theorem]{Proposition}
\newtheorem{corollary}[theorem]{Corollary} 
\theoremstyle{remark}
\theoremstyle{definition}
\newtheorem{definition}[theorem]{Definition} 
\newtheorem{example}[theorem]{Example} 
\def \({\left( }
\def \){\right) }
\DeclareMathOperator{\R}{\mathbb{R}}
\DeclareMathOperator{\N}{\mathbf{N}}
\begin{document}
\title[Surfaces of osculating circles in Euclidean space]{Surfaces of
osculating circles in Euclidean space}
\author{Rafael L\'{O}PEZ}
\address{Departamento de Geometr\'{\i}a y Topolog\'{\i}a, Universidad de
Granada, 18071 Granada, Spain}
\email{rcamino@ugr.es}
\author{\c{C}etin CAMCI}
\address{Department of Mathematics, Faculty of Sciences and Arts, Onsekiz
Mart University, \c{C}anakkale, TURKEY}
\email{ccamci@comu.edu.tr}
\author{Ali U\c{c}um}
\address{K\i r\i kkale University, Faculty of Sciences and Arts, Department
of Mathematics, K\i r\i kkale-Turkey}
\email{aliucum05@gmail.com}
\author{Kaz\i m\ \.{I}larslan}
\address{K\i r\i kkale University, Faculty of Sciences and Arts, Department
of Mathematics, K\i r\i kkale-Turkey}
\email{kilarslan@yahoo.com}
\keywords{Canal surface, surface of osculating circles, Weingarten surface,
curve.}
\subjclass{53B30, 53A35.}

\begin{abstract}
We introduce a new class of surfaces in Euclidean $3$-space, called  surfaces of osculating circles, using the concept of  osculating circle of a regular curve. These surfaces contain a uniparametric family of planar lines of curvature. In this paper, we  classify those ones that are canal surfaces and  Weingarten surfaces. 
\end{abstract}

\maketitle

\section{Introduction and first results}

In  the classical theory of surfaces in Euclidean space $\R^3$, there are many ways of construction of surfaces with the help of curves. A
clear example are the Darboux surfaces. These surfaces are defined
kinematically as the movement of a curve by a uniparametric family of rigid
motions of $\R^3$. A parametrization of a such surface is $X
(s,t)=A(t)\cdot\alpha (s)+\beta (t)$, where $\alpha $ and $\beta $ are two
space curves and $A(t)$ is an orthogonal matrix. The class of Darboux
surfaces includes translation surfaces, circular surfaces, ruled surfaces,
surfaces of revolution and Monge surfaces. Since these surfaces are
generated by curves followed of rigid motions of $\R^3$, they have received a special attention   in computer aided geometric design (CAGD) of geometric models in
architecture, engineering or physics thanks to  their ease of computation and calculation (\cite{Pot}).

From the geometric viewpoint, an important work is the
classification of these surfaces according its Gaussian curvature and its
mean curvature. For example,  ruled surfaces
and surfaces of revolution with constant (Gaussian or mean) curvature are well known. In contrast, the full classification of translation surfaces with constant
curvature was an open problem until very recently: see \cite{ha,hl1,hl2,lp}.

In this paper, we define a new  class of surfaces by means of the concept of   osculating
circle of a regular curve in  $\R^3$. Recall that if $C$ is a planar regular curve and $p\in C$ is a
given point, the osculating circle at $p$ is the tangent circle to $C$ at $p$ and
with the same curvature of $C$ at $p$. If $\alpha =\alpha (s)$ is a parametrization by
arc length and $\{T,N\}$ is its Frenet frame, the osculating circle
at a point $s$ with non-vanishing curvature $\kappa (s)$ parametrizes as
\begin{equation}\label{uu}
u\mapsto \alpha (s)+\frac{1}{\kappa (s)}N(s)+\frac{1}{\kappa (s)}\left(
\sin{u}\, T(s) -\cos{u}\, N(s)\right) .
\end{equation}%
Let us notice that if $u=0$, then the point of the circle is just the point $%
\alpha (s)$. In the case that $C$ is a spatial curve, the definition of the
osculating circle coincides with \eqref{uu}, observing that this circle is included in the
osculating plane at the point $\alpha(s)$.

We are in conditions to introduce the new object of study.

\begin{definition}
Let $\alpha\colon I\subset \R\to\R^3$ be a regular curve parametrized by arc length with non-zero
curvature $\kappa$. Let $r(s)=1/\kappa(s)$ be the radius of curvature. The surface of osculating circles is the parametrized
surface $X\colon I\times\R\to\R^3$ defined by
\begin{equation}\label{sur}
\begin{split}
X(s,u)& =\alpha(s)+r(s)N(s)+r(s)\left(\sin u\, T(s)-\cos u\, N(s)\right) \\
&=\alpha(s)+r(s)\left(\sin{u}\, T(s)+(1-\cos{u})N(s)\right).  
\end{split}%
\end{equation}%
 The curve $\alpha $ is called the generator of the
surface and the parametric $s$-curves, $u\mapsto X(s,u)$ are called parallels. 
\end{definition}

A special case of surfaces of osculating circles occurs when the generator is a planar curve because in such a case, the surfaces must be an open subset of a plane.

\begin{proposition}\label{pr-plane}
If the generator of a surfaces $S$ of osculating circles is contained in a plane $P$, then $S$ is a subset of $P$.
\end{proposition}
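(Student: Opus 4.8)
The plan is to use the elementary fact that a planar curve has both its unit tangent $T$ and its principal normal $N$ lying in the direction plane of $P$; since the osculating circle \eqref{uu} at each point of $\alpha$ is built only from the vectors $\alpha(s)$, $T(s)$ and $N(s)$, the surface cannot leave $P$.

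Concretely, I would fix a unit normal vector $\mathbf{n}$ of the plane $P$ together with a point $p_0\in P$, so that a point $q\in\R^3$ belongs to $P$ precisely when $\langle q-p_0,\mathbf{n}\rangle=0$. The hypothesis that the generator lies in $P$ reads $\langle\alpha(s)-p_0,\mathbf{n}\rangle=0$ for all $s\in I$. Differentiating this identity with respect to $s$ gives $\langle T(s),\mathbf{n}\rangle=0$, and differentiating once more and using the Frenet equation $T'=\kappa N$ gives $\kappa(s)\langle N(s),\mathbf{n}\rangle=0$; since $\kappa(s)\neq 0$ by hypothesis, this forces $\langle N(s),\mathbf{n}\rangle=0$ for every $s$.

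Finally I would substitute these three orthogonality relations into the parametrization \eqref{sur}. Using the second line of \eqref{sur},
\[
\langle X(s,u)-p_0,\mathbf{n}\rangle=\langle\alpha(s)-p_0,\mathbf{n}\rangle+r(s)\sin u\,\langle T(s),\mathbf{n}\rangle+r(s)(1-\cos u)\,\langle N(s),\mathbf{n}\rangle,
\]
and each of the three terms on the right vanishes, so $\langle X(s,u)-p_0,\mathbf{n}\rangle=0$ for all $(s,u)\in I\times\R$. Hence every point of $S$ lies in $P$, which is the assertion; moreover, since $X$ is a regular parametrization, $S$ is in fact an open subset of $P$.

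There is essentially no obstacle here: the argument is a short computation once the normal vector $\mathbf{n}$ is introduced. The only point deserving attention is the use of $\kappa\neq 0$ to pass from $\kappa\langle N,\mathbf{n}\rangle=0$ to $\langle N,\mathbf{n}\rangle=0$, which is also the reason the construction requires the standing assumption that the curvature of the generator never vanishes, so that the Frenet frame $\{T,N\}$ and the radius of curvature $r=1/\kappa$ are well defined at every point.
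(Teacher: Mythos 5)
Your proof is correct and follows essentially the same route as the paper: both reduce the claim to the fact that $T(s)$ and $N(s)$ are orthogonal to the plane's normal vector and then substitute into \eqref{sur}. The only difference is that you derive this orthogonality by differentiating $\langle\alpha(s)-p_0,\mathbf{n}\rangle=0$ twice, whereas the paper simply cites the equivalent fact that the binormal of a planar curve is $\pm\mathbf{n}$.
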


\begin{proof} Suppose that  the generator $C$ is contained in the plane $P$ of equation $\langle {\bf x}-p_0,\vec{v}\rangle=0$, $p_0\in\R^3$, $|\vec{v}|=1$. Here $\langle,\rangle$ is the Euclidean metric of $\R^3$. Then the binormal vector $B(s)$ of $C$ is $\pm \vec{v}$. From \eqref{sur},  it is immediate that 
$$\langle X(s,u)-p_0,\vec{v}\rangle=\langle\alpha(s)-p_0,\vec{v}\rangle=0.$$
\end{proof}

We show some examples of surfaces of osculating circles. The generator of the first example is a helix, a curve with constant curvature and constant torsion. In the second example, the generator is a cubic curve and finally, in the third example we show a compact surface whose generator is a closed curve.

\begin{example}
\label{ex1} Let $\alpha$ be the circular helix
\begin{equation*}
\alpha (s) =\left( 2\cos \frac{s}{\sqrt{5}},2\sin \frac{s}{\sqrt{5}},\frac{s%
}{\sqrt{5}}\right).
\end{equation*}%
The curvature is $\kappa =2/5$ and the torsion is $\tau =1/5$.  The tangent vectors $T(s)$ and the normal vectors $N(s)$ are
\begin{eqnarray*}
T &=&\frac{1}{\sqrt{5}}\left( -2\sin \frac{s}{\sqrt{5}},2\cos \frac{s}{\sqrt{5}},1\right) , \\
N &=&\left( -\cos \frac{s}{\sqrt{5}},-\sin \frac{s}{\sqrt{5}},0\right).
\end{eqnarray*}%
The parametrization of the surface of osculating circles   is
\begin{equation*}
X(s,u) =\left(
\begin{array}{l}
\frac{1}{2} \left(\cos {\frac{s}{\sqrt{5}}} (5 \cos{u}-1)-2 \sqrt{5} \sin {%
\frac{s}{\sqrt{5}}} \sin{u}\right) \\
\frac{1}{2} \sin {\frac{s}{\sqrt{5}}} (5 \cos{u}-1)+\sqrt{5} \cos {\frac{s}{%
\sqrt{5}}} \sin{u} \\
\dfrac{2 s+5 \sin{u}}{2 \sqrt{5}}%
\end{array}%
\right).
\end{equation*}%
The graphic of the surface of osculating circles $X(s,u) $ appears  in Figure \ref{fig1}.
\begin{figure}[hbtp]
\begin{center}
\includegraphics[width=.35\textwidth]{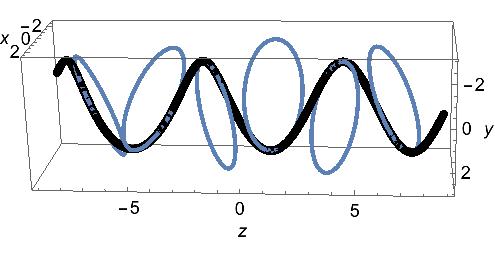} \quad %
\includegraphics[width=.4\textwidth]{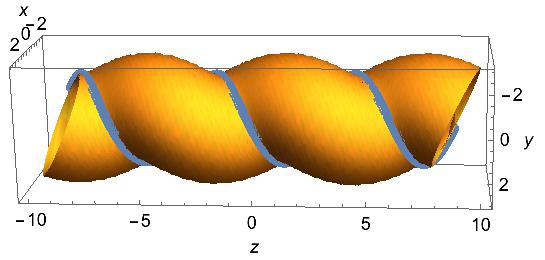}
\end{center}
\caption{Example  \ref{ex1}. Left: The generator $\alpha$
(black) together with some parallels. Right: the surface of osculating
circles. }\label{fig1}
\end{figure}
\end{example}

\begin{example}
\label{ex2} Let $\alpha$ be the cubic $\alpha(s)=(s,s^2/2,s^3/3)$. Let us
notice that this curve is not parametrized by arc length. The radius of
curvature is
\begin{equation*}
r(s)=\frac{\left(1+s^2+s^4\right)^{3/2}}{1+s^2}
\end{equation*}
and the tangent vectors and the normal vectors are, respectively, 
\begin{equation*}
\begin{split}
T(s)&=\frac{1}{\sqrt{1+s^2+s^4}}(1,s,s^2), \\
N(s)&=\frac{1}{(1+s^2)\sqrt{1+s^2+s^4}}\left(s(1+2s^2),1-s^4,s(2+s^2)\right).
\end{split}%
\end{equation*}
Thus the surface of osculating circles is (see Figure \ref{fig2}, left)
\begin{equation*}
X(s,u) =\left(
\begin{array}{l}
\frac{\left(s^4+s^2+1\right) \left(\left(2 s^3+s\right) \cos{u}+(s^2+1) \sin{%
u}\right)-s^3 \left(2 \left(s^4+s^2\right)+1\right)}{(s^2+1)^2} \\
\frac{2 \left(s^6-1\right) \cos{u}-2 s^6+s^4+s^2+2 \left(s^5+s^3+s\right)
\sin{u}+2}{2 (s^2+1)} \\
\frac{s \left(4 s^6+11 s^4+10 s^2+3 \left(s^4+s^2+1\right) \left(s (s^2+1)
\sin{u}-(s^2+2) \cos{u}\right)+6\right)}{3 (s^2+1)^2}%
\end{array}%
\right).
\end{equation*}
\end{example}

Finally, we show an example of a compact surface of osculating circles.

\begin{example}
\label{ex3} Consider the generator
\begin{equation*}
\alpha(s)=(\cos (s) (4+\cos (2 s)),\sin (s) (4+\cos (2 s)),\sin (2 s)).
\end{equation*}
This curve is closed and simple and it is contained in a torus of inner and outer radii $1$
and $4$, respectively. The surface of osculating circles whose generator is $\alpha$ has a cumbersome parametrization but its picture appears in Figure \ref{fig2}, right.
\begin{figure}[hbtp]
\begin{center}
\includegraphics[width=.35\textwidth]{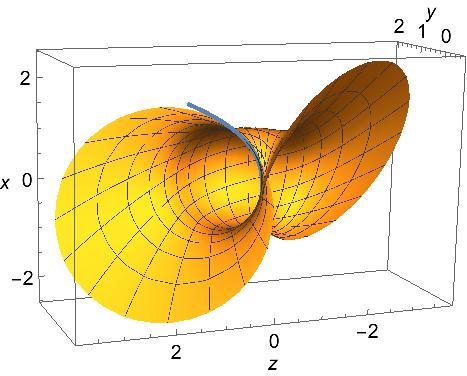}\quad %
\includegraphics[width=0.4\textwidth]{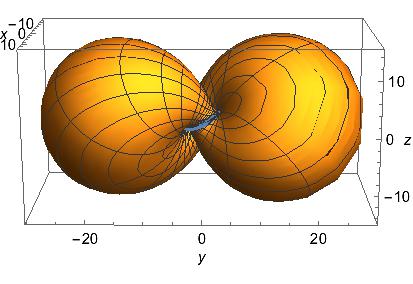}
\end{center}
\caption{Surfaces of osculating circles. Left: Example \ref{ex2}.
Right: Example \ref{ex3}. }
\label{fig2}
\end{figure}
\end{example}

The organization of this paper is the following. In Section \ref{sec2}, we study some basic properties of the surfaces of osculating circles, such as, its 
regularity and umbilicity. For this, we compute the first and the second fundamental form of the surface. We also discuss the necessary and sufficient condition   to be a canal surface. We prove that the surface of osculating
circles is a canal surface if and only if the curvature of the generator is constant (Theorem \ref{pr-canal}). Curves in $\R^3$ with constant curvature are called Salkowski curves  (\cite{mo,sa}).  In Section \ref{sec3}, we classify  the surfaces of osculating circles of Weingarten type, proving the the generator is a   Salkowski curve (Theorem \ref{th-weingarten}). Finally,  we prove that open subsets of planes and  spheres are the only surfaces of osculating circles with constant Gaussian curvature or constant mean curvature (Theorem \ref{t-mean}). 

\section{The curvature of the surfaces of osculating circles}\label{sec2}

We begin this section analyzing   in what points the surface of osculating circles is regular.

\begin{proposition}\label{pr-regular}
The set of non-regular points of a surface of osculating circles is formed
by the generator $\alpha$ and the set $\{X(s,u):r'(s)=\tau(s)=0, u\in\R\}$.  
\end{proposition}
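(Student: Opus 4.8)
The plan is to compute the partial derivatives $X_s$ and $X_u$ and determine where $X_s\wedge X_u$ vanishes. Write $X(s,u)=\alpha(s)+r(s)\bigl(\sin u\,T(s)+(1-\cos u)N(s)\bigr)$. Since the generator is parametrized by arc length, $\alpha'=T$, and the Frenet equations give $T'=\kappa N$, $N'=-\kappa T+\tau B$. Differentiating in $u$ is immediate: $X_u=r(\cos u\,T+\sin u\,N)$, a vector of length $r\neq 0$, so the degeneracy can only come from $X_s$ being parallel to $X_u$ (or zero). Differentiating in $s$ and collecting terms in the Frenet basis $\{T,N,B\}$, using $\kappa r=1$ so that $r\kappa$-terms simplify, one obtains an expression of the form $X_s=A\,T+C\,N+D\,B$ where, after the dust settles, $D=r\tau(1-\cos u)$ and $A,C$ are explicit functions of $s,u$ involving $r'$; in particular the coefficient of $T$ carries a factor that vanishes exactly when $u=0$, reflecting that the $u=0$ curve of the surface is $\alpha$ itself.

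Next I would argue in two cases according to whether $u=0$. If $u=0$, then $X(s,0)=\alpha(s)$, and $X_s=\alpha'=T$ while $X_u=rT$, so $X_s$ and $X_u$ are parallel and the point is non-regular; this accounts for the generator $\alpha$ in the statement. If $u\neq0$, then $1-\cos u$ and the relevant trigonometric factors are handled so that regularity fails precisely when $X_s$ is proportional to $X_u=r(\cos u\,T+\sin u\,N)$. Since $X_u$ has no $B$-component, a necessary condition is $D=r\tau(1-\cos u)=0$, hence $\tau(s)=0$ (as $r\neq0$ and $u\neq 0$). Imposing $\tau=0$ and then requiring the $T$- and $N$-components of $X_s$ to be proportional to those of $X_u$ forces an identity that, after simplification, reduces to $r'(s)=0$. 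Conversely, when $r'(s)=\tau(s)=0$ one checks directly that $X_s$ becomes a multiple of $X_u$ for every $u$, so the whole parallel is non-regular. This yields exactly the set $\{X(s,u):r'(s)=\tau(s)=0,\ u\in\R\}$.

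The main obstacle is bookkeeping: the coefficient $C$ of $N$ in $X_s$ (and to a lesser extent $A$) is a somewhat messy combination of $r'$, $\sin u$, $\cos u$ and constants coming from $\kappa r=1$, and one must be careful to factor it correctly so that the proportionality condition $A\sin u=C\cos u$ (the vanishing of the $2\times 2$ determinant formed by the $T,N$-components) collapses cleanly to $r'=0$ rather than to some spurious relation between $s$ and $u$. I would organize the computation by first treating the $B$-component to kill $\tau$, then substituting $\tau=0$ before expanding the tangential part, which keeps the second step short. A sanity check against Example~\ref{ex1} (a helix, where $r'=0$ but $\tau\neq0$, so the only singular points should be along $\alpha$) and against Proposition~\ref{pr-plane} (planar generator, where $\tau\equiv0$ and the surface is flat) confirms the statement is consistent.
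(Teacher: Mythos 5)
Your proposal is correct and follows essentially the same route as the paper: the paper computes $X_s=(r'\sin u+\cos u)T+(r'(1-\cos u)+\sin u)N+r\tau(1-\cos u)B$ and $X_u=r(\cos u\,T+\sin u\,N)$ and factors the cross product as $r(1-\cos u)\left(-r\tau\sin u\,T+r\tau\cos u\,N+r'B\right)$, which is exactly your linear-dependence analysis (your $2\times 2$ determinant indeed collapses to $r'(1-\cos u)$). The only blemish is the aside claiming the $T$-coefficient of $X_s$ vanishes at $u=0$ (it equals $1$ there); this is harmless since you treat $u=0$ separately and correctly via $X_s=T$, $X_u=rT$.
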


\begin{proof}
We compute the partial derivatives of $X(s,u)$. As usually, the subscripts will
indicate the corresponding derivatives and $^{\prime }$ is the
differentiation of   functions of one variable depending on $s$. Using the Frenet
equations of $\alpha$
\begin{equation*}
\begin{split}
X_s&=(r'\sin u+\cos u)T+(r'(1-\cos u)+\sin
u)N+r\tau(1-\cos u)B \\
X_u&= r(\cos u\, T+\sin u\, N).
\end{split}%
\end{equation*}
Therefore,
\begin{equation*}
X_s\times X_u=r(1-\cos u)\left(-r\tau \sin u\, T+r\tau \cos u\, N+r^{\prime
}B\right).
\end{equation*}
Then $(X_s\times X_u)(s,u)=0$ if and only if $\cos{u}=1$, that is, at the points of the generator or when $r'(s)=\tau(s)=0$. 
\end{proof}

From the above proof, we deduce that the unit normal vector field at regular points is
\begin{equation}  \label{normal}
\N(s,u)=\frac{1}{\sqrt{r^2\tau^2+r'^2}}\left(-r\tau \sin u\, T+r \tau
\cos u\, N+r'B\right).
\end{equation}

A particular case of surfaces of osculating circles are the spheres of $\R^3$. 

\begin{proposition}
\label{pr-sphere}  If the generator of a surface of osculating circles is a
spherical curve, then the surface is an open subset of a sphere.
\end{proposition}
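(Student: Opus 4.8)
The idea is to show something slightly more precise: if the generator $\alpha$ lies on the sphere $\Sigma$ of center $p_0$ and radius $R>0$, then the entire image of $X$ lies on $\Sigma$. Conceptually this is because for a spherical curve the osculating sphere at every point is $\Sigma$ itself, while an osculating circle always sits on the osculating sphere; but I would give a self-contained computational argument. Starting from $\langle\alpha(s)-p_0,\alpha(s)-p_0\rangle=R^2$ and differentiating with respect to $s$, using $\alpha'=T$ and the Frenet equations, one gets successively
\[
\langle\alpha-p_0,T\rangle=0,\qquad \langle\alpha-p_0,N\rangle=-r,\qquad \tau\,\langle\alpha-p_0,B\rangle=-r',
\]
the last one by differentiating the middle identity and inserting $N'=-\kappa T+\tau B$. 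Hence, at points where $\tau\neq 0$,
\[
\alpha(s)-p_0=-r(s)\,N(s)-\frac{r'(s)}{\tau(s)}\,B(s).
\]

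Substituting this expression into \eqref{sur}, the two $N$-terms cancel and
\[
X(s,u)-p_0=r(s)\sin u\,T(s)-r(s)\cos u\,N(s)-\frac{r'(s)}{\tau(s)}\,B(s).
\]
Since $\{T,N,B\}$ is orthonormal, squaring the norm gives
\[
\langle X(s,u)-p_0,X(s,u)-p_0\rangle=r(s)^2+\frac{r'(s)^2}{\tau(s)^2}=\langle\alpha(s)-p_0,\alpha(s)-p_0\rangle=R^2,
\]
so $X(s,u)\in\Sigma$ for every $(s,u)$ with $\tau(s)\neq 0$. By Proposition \ref{pr-regular} the surface $S$ has regular points, and near such a point the image of $X$ is a $2$-dimensional submanifold of $\R^3$ contained in $\Sigma$, hence an open subset of $\Sigma$.

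The only step that requires care — and the mild obstacle here — is the division by $\tau$. On the open set $\{\tau\neq 0\}$ the computation above is valid, and since for fixed $u$ the map $s\mapsto\langle X(s,u)-p_0,X(s,u)-p_0\rangle$ is continuous, the identity $\langle X-p_0,X-p_0\rangle=R^2$ extends to the closure of $\{\tau\neq 0\}$. On an interval where $\tau$ vanishes identically the generator is a planar spherical curve, i.e. an arc of a circle, whose osculating circle at every point coincides with that circle; there Proposition \ref{pr-plane} and Proposition \ref{pr-regular} show that this part of the image contributes no regular points. Thus on its regular part $S$ is indeed an open subset of the sphere $\Sigma$.
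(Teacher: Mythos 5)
Your argument is correct, but it takes a detour that the paper avoids. The paper also starts by differentiating $\langle\alpha-p_0,\alpha-p_0\rangle=R^2$, but it stops after two derivatives, keeping only $\langle T,\alpha-p_0\rangle=0$ and $\langle N,\alpha-p_0\rangle=-r$, and then expands $|X(s,u)-p_0|^2$ directly from \eqref{sur}. Since $X-\alpha=r\sin u\,T+r(1-\cos u)N$ has no $B$-component, the inner product $\langle B,\alpha-p_0\rangle$ never enters the computation: the cross terms reduce to $-2r^2(1-\cos u)$, which exactly cancels $r^2(\sin^2 u+(1-\cos u)^2)=2r^2(1-\cos u)$, giving $|X-p_0|^2=R^2$ with no hypothesis on $\tau$. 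By contrast, you push one derivative further to solve for $\alpha-p_0$ in the Frenet frame, and it is precisely this inversion that forces the division by $\tau$ and then obliges you to patch things up on the closure of $\{\tau\neq 0\}$ and on intervals where $\tau\equiv 0$ (your handling of those cases is sound — on such an interval $r'=0$ as well, so Proposition \ref{pr-regular} kills the regular points — but it is work the statement does not require). What your route buys is the explicit identity $|X-p_0|^2=r^2+r'^2/\tau^2$, which makes visible the link with the spherical-curve criterion of Lemma \ref{le1} and with the osculating sphere; what the paper's route buys is a two-line proof valid at every point of the generator without case distinctions.
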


\begin{proof}
After a translation, we can assume that the generator $\alpha$ is included
in a sphere of radius $R>0$ centered at the origin.  Thus  $\langle\alpha(s),\alpha(s)\rangle=R^2$, so $\langle T(s),\alpha(s)\rangle=0$ and   $\langle N(s),\alpha(s) \rangle
=-1/\kappa(s) =-r(s)$ for all $s\in I$. From \eqref{sur}
\begin{equation*}
\begin{split}
|X(s,u)|^{2}&=R^{2}+r^2((1-\cos {u})^{2}+\sin^2{u})+2r(1-\cos {u})\langle
N,\alpha \rangle \\
&=R^{2}+r^2((1-\cos {u})^{2}+\sin^2{u})-2r^2(1-\cos {u})=R^{2},
\end{split}%
\end{equation*}%
proving that the surface is included in the same sphere where is contained $%
\alpha$.
\end{proof}

The generator of the surface showed in Example \ref{ex3} is a simple closed curve and the topology of the surface   is clearly a torus.  This  can be generalized in the sense that a compact surface of  osculating circles must be a topological torus.

\begin{proposition} 
  If $M$ is a compact surface of osculating circles, then $M$ is a torus.
\end{proposition}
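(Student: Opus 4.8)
The plan is to realize $M$ as the total space of a circle bundle over the generator and then use compactness to pin down the topology. First I would observe that if $M$ is a compact surface of osculating circles then, by Proposition \ref{pr-plane} and Proposition \ref{pr-sphere} together with the regularity statement in Proposition \ref{pr-regular}, the generator $\alpha$ must be a closed curve (a compact surface cannot be an open subset of a plane, and the parametrization $X(s,u)$ is $2\pi$-periodic in $u$, so compactness of the image forces the $s$-parameter to range over a compact interval with $\alpha$ closing up smoothly) and the surface must be regular, so in particular $\alpha$ has nowhere-vanishing torsion or nowhere-vanishing $r'$ at the bad points; in any case we may work on the regular part, which by compactness is all of $M$. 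Thus $\alpha\colon \mathbb{R}/L\mathbb{Z}\to\R^3$ is a closed regular curve with $\kappa>0$.

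Next I would make precise the fibration structure. For each fixed $s$, the parallel $u\mapsto X(s,u)$ is the osculating circle at $\alpha(s)$, a genuine round circle of radius $r(s)$ lying in the osculating plane, and distinct values of $s$ give the map $X$ which is $2\pi$-periodic and, on the regular set, an immersion. The key point is that $X$ descends to an immersion (in fact an embedding, after possibly passing to the situation of the examples, but immersion suffices for the topological conclusion via the area/Euler characteristic argument) of the torus $(\mathbb{R}/L\mathbb{Z})\times(\mathbb{R}/2\pi\mathbb{Z})$ onto $M$. I would argue that $M$ is therefore a compact surface without boundary that is the continuous (indeed smooth, on the regular locus) image of a torus under a local diffeomorphism; since $M$ is assumed to be a surface (a regular embedded compact surface in $\R^3$), it is orientable, and the map from the torus is a covering map onto $M$. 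A connected surface covered by the torus is itself a torus (the only closed surfaces with a torus cover are the torus and the Klein bottle, and the Klein bottle is non-orientable, hence excluded since $M\subset\R^3$ is two-sided). This yields $M\cong T^2$.

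The main obstacle I anticipate is controlling the global behavior of $X$: showing that the osculating circles at different points $s$ fit together to give a genuine (immersed or embedded) closed surface rather than something that degenerates, self-intersects badly, or fails to be $L$-periodic in a compatible way. Concretely one must rule out that the osculating circle ``radius'' $r(s)$ or the osculating plane varies so as to create singularities densely — but Proposition \ref{pr-regular} already handled the pointwise regularity, so the issue is purely the periodicity and the global matching, i.e.\ checking that the hypothesis ``$M$ is a compact surface'' forces $\alpha$ to be closed and forces the parametrization to factor through the torus. Once that reduction is in place, the topological conclusion is immediate from covering space theory and orientability.

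In summary: (i) compactness $+$ Proposition \ref{pr-plane}, \ref{pr-sphere} rule out the degenerate cases and force $\alpha$ to be a nowhere-planar closed curve, so $X$ is defined on $(\mathbb{R}/L\mathbb{Z})\times(\mathbb{R}/2\pi\mathbb{Z})\cong T^2$; (ii) Proposition \ref{pr-regular} makes $X$ an immersion onto $M$, hence a covering map onto the compact surface $M$; (iii) $M\subset\R^3$ is orientable, and the only closed orientable surface covered by $T^2$ is $T^2$ itself, so $M$ is a torus. $\hfill\square$
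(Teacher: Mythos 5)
Your overall strategy --- realizing $M$ as the image of the torus $(\mathbb{R}/L\mathbb{Z})\times(\mathbb{R}/2\pi\mathbb{Z})$ under $X$ and concluding by covering-space theory plus orientability --- is genuinely different from the paper's, which simply observes that $V=X_u$ is a nowhere-vanishing tangent vector field on $M$ (it is tangent to the parallels), so $\chi(M)=0$ by the Poincar\'e index theorem and $M$ must be a torus. However, your argument has a concrete gap at its central step (ii). You claim that Proposition \ref{pr-regular} makes $X$ an immersion on all of $M$ ``by compactness,'' but that proposition asserts the opposite: the set of non-regular points of the parametrization \emph{always} contains the entire generator, i.e.\ the circle $u\equiv 0$, since $X_s\times X_u=r(1-\cos u)\left(-r\tau\sin u\,T+r\tau\cos u\,N+r'B\right)$ vanishes identically when $\cos u=1$ (indeed both $X_s$ and $X_u$ are parallel to $T$ there). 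Excluding the points with $r'=\tau=0$ removes only the second kind of singularity; the degeneracy along $u=0$ is unconditional. Hence the map from $T^2$ to $M$ fails to be a local diffeomorphism along a whole circle, it is not a covering map, and the conclusion of step (iii) does not follow as stated.

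The repair is essentially the paper's proof: even though $dX$ drops rank at $u=0$, the single vector $X_u=r(\cos u\,T+\sin u\,N)$ never vanishes (at $u=0$ it equals $rT\neq 0$), so one obtains a nowhere-zero tangent field on the compact surface $M$ and gets $\chi(M)=0$ directly from Poincar\'e--Hopf; orientability of a compact embedded surface in $\R^3$ then excludes the Klein bottle, exactly as in your final step. Your step (i) --- that compactness forces $\alpha$ to close up and the parametrization to factor through $T^2$ --- is reasonable (and is implicitly needed by the paper's proof as well), but it cannot rescue the covering-map claim in step (ii).
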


\begin{proof} Consider a compact surface $M$ of osculating circles parametrized by $X=X(s,u)$.  Define on $M$ the vector field 
\[ V(s,u)=X_u=\sin u\, N(s)+\cos u\, T(s).
\]
 Then $V$ have not zeroes on $M$ so  its  index is $0$. By the Poincar\'e index theorem, the surface must be a topological torus.
\end{proof}

We study when a surface of osculating circles is a canal surface. Recall
that a canal surface is the envelope of the $1$-parameter pencil of spheres $%
\Sigma(s)$, centered at a spine curve $\beta (s)$,  $\Sigma(\mathbf{x};s)= |%
\mathbf{x}-\beta(s)|^{2}-r(s)^{2}=0$ (\cite{Mar}). The envelope condition is given by
\begin{equation}  \label{enve}
\frac{d}{ds} \Sigma(s)=\langle \mathbf{x}-\beta (s) ,\beta^{\prime
}(s)\rangle+r(s)r'(s) =0.
\end{equation}

\begin{theorem}\label{pr-canal}
A surface of osculating circles   is a canal
surface if and only if the generator is a Salkowski curve.
\end{theorem}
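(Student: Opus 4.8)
The plan is to show that the parallels (the $s$-curves $u \mapsto X(s,u)$) are exactly the characteristic circles of a canal surface precisely when $r$ is constant, by identifying the would-be spine curve and radius function and checking the envelope condition \eqref{enve}. First I would observe that each osculating circle lies in the osculating plane $\mathrm{span}\{T(s),N(s)\}$ at $\alpha(s)$ and is centered at $c(s) := \alpha(s) + r(s)N(s)$ with radius $r(s)$. For a canal surface the characteristic curves are circles lying in the planes orthogonal to the spine; so if the surface of osculating circles is a canal surface with spine $\beta$, then $\beta'(s)$ must be orthogonal to the osculating plane, i.e. $\beta'(s) \parallel B(s)$. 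I would then try to pin down $\beta$ and the sphere radii $\rho(s)$ from the geometry: the sphere $\Sigma(s)$ must contain the circle and have its center on the line through $c(s)$ in the direction $B(s)$, so $\beta(s) = c(s) + \lambda(s)B(s)$ and $\rho(s)^2 = r(s)^2 + \lambda(s)^2$ for some function $\lambda$.

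Next I would impose the envelope condition. Alternatively, and more cleanly, I would use the standard fact (or re-derive it) that a surface is a canal surface if and only if one family of curvature lines consists of circles, equivalently the surface has a family of planar curvature lines which are circular, equivalently — using the computations already set up — the parallels are curvature lines AND circles. Since the parallels are manifestly circles by construction, the real content is: the surface of osculating circles is a canal surface iff the parallels are lines of curvature. This reduces everything to a condition on the second fundamental form. Using $X_u = r(\cos u\, T + \sin u\, N)$ and the unit normal \eqref{normal}, I would compute $\N_u$ and check when $\N_u$ is proportional to $X_u$ (the Rodrigues equation characterizing curvature lines along the $u$-direction). Differentiating \eqref{normal} in $u$ gives $\N_u = \frac{1}{\sqrt{r^2\tau^2 + r'^2}}\left(-r\tau\cos u\, T - r\tau\sin u\, N\right)$, which is $-\frac{\tau}{\sqrt{r^2\tau^2+r'^2}}\,X_u$ already. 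So the parallels are always lines of curvature — which means the obstruction is not there, and instead I must use the genuine canal-surface criterion: the other family (the $s$-curves... no, the parallels are the $s=$const curves) — I need the circles to be cross-sections by planes orthogonal to a spine, which forces the circles' planes to form a one-parameter family of parallel-normal planes along a curve; this is where $r' = 0$ enters, because the center $c(s)$ and the plane vary and we need $\beta'(s) \perp$ (osculating plane), i.e. $\langle (\alpha + rN)' + (\lambda B)', T\rangle = 0$ and $\langle \cdot, N\rangle = 0$.

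So the core computation I would carry out is: write $\beta(s) = \alpha(s) + r(s)N(s) + \lambda(s)B(s)$, compute $\beta'(s)$ using the Frenet equations, and demand $\beta'(s) \parallel B(s)$. The tangential component gives $1 + (rN)'\cdot T + \cdots = 1 + \langle r'N - r\kappa T + \cdots\rangle$... concretely $\beta' = (1 - r\kappa)T + r'N + r\tau B + (\lambda' B - \lambda\tau N) = 0\cdot T + (r' - \lambda\tau)N + (r\tau + \lambda')B$ since $r\kappa = 1$. For $\beta' \parallel B$ we need $r' - \lambda\tau = 0$, i.e. $\lambda = r'/\tau$ wherever $\tau \neq 0$. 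Then I would verify the envelope condition \eqref{enve}: with $\mathbf{x} = X(s,u)$ on the circle, $\mathbf{x} - \beta(s) = r(\sin u\, T + (1-\cos u)N) - \lambda B - rN = r\sin u\, T - (r\cos u)N - \lambda B$; pairing with $\beta'(s) = (r\tau + \lambda')B$ gives $\langle \mathbf{x} - \beta, \beta'\rangle = -\lambda(r\tau + \lambda')$, and we also need $+\rho\rho' = +\lambda(r\tau+\lambda') \cdot(\text{sign check}) $... the point is that \eqref{enve} will reduce, after substituting $\rho^2 = r^2 + \lambda^2$ and $\lambda = r'/\tau$, to an ODE that is satisfied identically when $r' \equiv 0$ (then $\lambda \equiv 0$, $\beta = c$, $\rho = r$, and everything collapses to the classical picture of a tube of varying radius $r$ around the center curve — wait, that still needs checking), and conversely forces $r' \equiv 0$.

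The main obstacle I anticipate is handling the locus where $\tau(s) = 0$: there $\lambda = r'/\tau$ is not defined by the above, yet by Proposition~\ref{pr-regular} such points with $r' = 0$ are non-regular and with $r' \neq 0$ the surface is regular, so one must argue separately (by continuity, or by a direct second-fundamental-form argument) that $r' \equiv 0$ is still forced. I would dispatch this by noting that if the surface is a canal surface then its spine and radius are smooth, the identity $\lambda\tau = r'$ must hold on the dense open set where $\tau \neq 0$, and by continuity $r' = 0$ at torsion-zero points too — unless $r' \neq 0$ on an interval where $\tau \equiv 0$, but on such an interval $\alpha$ is planar (Proposition~\ref{pr-plane}) so $S$ is a piece of a plane, which is a canal surface with $r' \neq 0$ only in the degenerate sense of a "cylinder of circles", requiring a small direct check. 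Modulo this case analysis, the conclusion is that the surface of osculating circles is a canal surface exactly when $r = 1/\kappa$ is constant, i.e. the generator is a Salkowski curve. I would close by noting the resulting canal surface is the tube of constant radius $r$ about the generator's evolute-type curve $c(s) = \alpha(s) + rN(s)$.
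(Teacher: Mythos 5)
Your setup is sound up to the decisive step, but that step is asserted rather than checked, and when checked it fails. You write $\beta(s)=\alpha(s)+r(s)N(s)+\lambda(s)B(s)$ and $\rho^2=r^2+\lambda^2$, compute $\beta'=(r'-\lambda\tau)N+(r\tau+\lambda')B$, impose $\lambda=r'/\tau$, and then claim the envelope condition \eqref{enve} ``conversely forces $r'\equiv 0$''. Completing the pairing you began: with $\mathbf{x}=X(s,u)$ one has $\mathbf{x}-\beta=r\sin u\,T-r\cos u\,N-\lambda B$, hence
\[
\langle\mathbf{x}-\beta,\beta'\rangle+\rho\rho'
=-r\cos u\,(r'-\lambda\tau)-\lambda(r\tau+\lambda')+rr'+\lambda\lambda'
=r(1-\cos u)(r'-\lambda\tau),
\]
which vanishes for all $u$ exactly when $\lambda\tau=r'$ --- the very relation you have already imposed. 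So for your choice $\lambda=r'/\tau$ the envelope condition holds identically for an arbitrary generator with $\tau\neq 0$; no step of your argument ever produces $r'=0$. Your proposal therefore establishes the ``if'' direction (take $\lambda\equiv 0$ when $r'\equiv 0$) but not the ``only if'' direction, which is the substantive half of the theorem.

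The paper's proof takes a different, and narrower, route: it fixes the pencil of spheres in advance to be the spheres of radius $r(s)$ centered at the osculating-circle centers $\beta=\alpha+rN$, i.e.\ $\lambda\equiv 0$ and $\rho=r$ in your notation. For that family the same computation gives $\langle\mathbf{x}-\beta,\beta'\rangle+rr'=rr'(1-\cos u)$, whose vanishing for all $u$ does force $r'\equiv 0$. To salvage your more flexible ansatz you would need an additional argument showing that any canal structure on this surface must have its spine on the axis $\lambda\equiv 0$ (for instance, by pinning down the spine and sphere radii from the characteristic circles and ruling out the choice $\lambda=r'/\tau$); without such a normalization of the sphere family, the computation you set up certifies the envelope condition for every generator with nonvanishing torsion, and the converse implication cannot be reached along the route you describe.
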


\begin{proof}
Let $\beta(s) =\alpha (s) +r(s) N(s)$ be the center of the osculating
circles and consider the $1$-parameter family of spheres $\Sigma(\mathbf{x}%
;s)= |\mathbf{x}-\beta(s)|^{2}-r(s)^{2}=0$. By the definition \eqref{sur} of the surface, it is clear that the
surface is contained in this uniparametric family of spheres. We need to check
the envelope condition \eqref{enve}. Since $\beta'=r'N+r\tau B$, we have
\begin{equation*}
\frac{d}{ds}\Sigma(s)=\langle -r\cos{u}\,N+r\sin{u}\, T,r'N+r\tau B\rangle+rr^{\prime
}=rr'(1-\cos{u})=0
\end{equation*}
for all $s\in I$. Therefore $r'(s)=0$ for all $s\in I$ and this is equivalent to say that $\kappa$ is a nonzero constant function.
\end{proof}

We show an example of a surface of osculating circles
whose generator is a Salkowski curve, in particular, the surface is a canal
surface.

\begin{example}
\label{ex4} Let us consider the Salkowski curve in $\R^3$ parametrized by
\begin{equation*}
\alpha (s) =\left(
\begin{array}{l}
\frac{78s\sqrt{25-s^{2}}\cos \left( \sqrt{26}\arcsin \left( \frac{s}{5}%
\right) \right) +\sqrt{26}(28s^{2}-625)\sin \left( \sqrt{26}\arcsin {\frac{s%
}{5}} \right) }{2860}\\
\frac{\sqrt{26}\left( 625-28s^{2}\right) \cos \left( \sqrt{26}\arcsin{\frac{s%
}{5}} \right) +78s\sqrt{25-s^{2}}\sin \left( \sqrt{26}\arcsin {\frac{s}{5}}
\right) }{2860} \\
\frac{25-2s^{2}}{4\sqrt{26}}%
\end{array}%
\right).
\end{equation*}%
The curvature of $\alpha$ is $\kappa =1$ and the torsion is $\tau =s/\sqrt{25-s^{2}}$. The tangent and normal vectors are 
\begin{eqnarray*}
T &=&\left(
\begin{array}{l}
\frac{-\sqrt{25-s^{2}}\cos \left( \sqrt{26}\arcsin {\frac{s}{5}} \right) }{5}%
-\frac{s\sin \left( \sqrt{26}\arcsin {\frac{s}{5}} \right) }{5\sqrt{26}} \\
\frac{s\cos \left( \sqrt{26}\arcsin{\frac{s}{5}} \right) }{5\sqrt{26}}-\frac{%
\sqrt{25-s^{2}}\sin \left( \sqrt{26}\arcsin \left( \frac{s}{5}\right)
\right) }{5} \\
-\frac{2}{\sqrt{26}}%
\end{array}%
\right) , \\
N &=&\frac{1}{\sqrt{26}}\left(5 \sin \left( \sqrt{26}\arcsin {\frac{s}{5}} \right) ,-5\cos \left( \sqrt{26}\arcsin {\frac{s}{5}} \right) ,-1\right).
\end{eqnarray*}%
So the parametrization of the surface of osculating circles is $X(s,u)=(X_1(s,u),X_2(s,u),X_3(s,u))$   given by%
\begin{eqnarray*}
X _{1}(s,u) &=&\frac{26\sqrt{25-s^{2}}\cos \left( \sqrt{26}\arcsin {\frac{s%
}{5}}\right) \left( 3s-22\sin u\right) }{2860} \\
&&-\frac{\sqrt{26}\sin \left( \sqrt{26}\arcsin {\frac{s}{5}} \right) \left(
75-28s^{2}+550\cos u+22s\sin u\right) }{2860},
\end{eqnarray*}%
\begin{eqnarray*}
X _{2}(s,u) &=&\frac{26\sqrt{25-s^{2}}\sin \left( \sqrt{26}\arcsin{\frac{s}{%
5}} \right) \left( 3s-22\sin u\right) }{2860} \\
&&+\frac{\sqrt{26}\cos \left( \sqrt{26}\arcsin {\frac{s}{5}} \right) \left(
75-28s^{2}+550\cos u+22s\sin u\right) }{2860}, \\
X _{3}(s,u) &=&\frac{21-2s^{2}+4\cos u-4s\sin u}{4\sqrt{26}}.
\end{eqnarray*}%
The graphic of  $X(s,u) $ is depicted in Figure %
\ref{fig3}.
\begin{figure}[hbtp]
\begin{center}
\includegraphics[width=.5\textwidth]{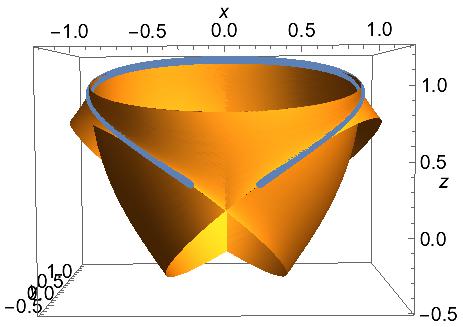}
\end{center}
\caption{ A surface of osculating circles whose
generator is a Salkowski curve: Example \ref{ex4}. }
\label{fig3}
\end{figure}
\end{example}

The property that parallel are circles implies remarkable properties on them.  

\begin{proposition} 
Parallels of  a  surface of osculating
circles are lines of curvature. In case that $r'(s_0)=0$, then the parallel is also a geodesic.  
\end{proposition}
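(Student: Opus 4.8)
The plan is to fix $s_0\in I$, write $\gamma(u)=X(s_0,u)$ for the corresponding parallel, and handle the two assertions using the frame computations already carried out in the proof of Proposition \ref{pr-regular}. Observe first that $\gamma$ is precisely the osculating circle \eqref{uu} at $\alpha(s_0)$: it is a circle of radius $r(s_0)$, centred at $\beta(s_0)=\alpha(s_0)+r(s_0)N(s_0)$, lying in the osculating plane $\Pi$ of direction $\mathrm{span}\{T(s_0),N(s_0)\}$. Moreover $\gamma'(u)=X_u(s_0,u)=r(s_0)(\cos u\,T+\sin u\,N)$ has constant length, so $u$ is a multiple of the arc length of $\gamma$ and $\gamma''\perp\gamma'$.

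For the first assertion I would invoke Rodrigues' criterion: a curve on a surface is a line of curvature if and only if the derivative of the unit normal field along it is proportional to its velocity. Along $\gamma$ every $s$-dependent quantity — the Frenet frame $\{T,N,B\}$, the functions $r$, $r'$, $\tau$, and $w:=\sqrt{r^2\tau^2+r'^2}$ — is frozen at the value $s_0$, so differentiating \eqref{normal} with respect to $u$ gives $\N_u=\frac{1}{w}(-r\tau\cos u\,T-r\tau\sin u\,N)=-\frac{\tau}{w}\,X_u$. Thus $\N_u$ is parallel to $\gamma'$, so $\gamma$ is a line of curvature, the direction $X_u$ being principal with principal curvature $\tau(s_0)/w(s_0)$ up to the sign convention. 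This step is immediate once \eqref{normal} is available.

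For the second assertion, assume $r'(s_0)=0$. By Proposition \ref{pr-regular} a point with $r'(s_0)=\tau(s_0)=0$ is non-regular, so we may assume $\tau(s_0)\neq 0$; then $w(s_0)=r(s_0)|\tau(s_0)|>0$ and \eqref{normal} specialises to $\N(s_0,u)=\mathrm{sign}(\tau(s_0))\,(-\sin u\,T+\cos u\,N)$, which lies in $\Pi$ for every $u$. On the other hand $\gamma''(u)=r(s_0)\,(-\sin u\,T+\cos u\,N)$, hence $\gamma''(u)=\mathrm{sign}(\tau(s_0))\,r(s_0)\,\N(s_0,u)$ is orthogonal to the surface. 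Since the geodesic curvature vector of $\gamma$ is, up to a positive factor, the component of $\gamma''$ tangent to the surface and orthogonal to $\gamma'$, it vanishes, so $\gamma$ is a geodesic. Equivalently, a planar curve lying on a surface is a geodesic exactly when the unit normal of the surface stays in the plane of the curve, and annihilating the $B$-term in \eqref{normal} is precisely the condition $r'(s_0)=0$.

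The only point needing care — the expected obstacle, though a minor one — is the degenerate locus $r'=\tau=0$, where $w=0$ and \eqref{normal} is meaningless; there the entire parallel consists of singular points by Proposition \ref{pr-regular}, so both statements are to be understood on the regular part of the surface, and on that part the hypothesis $r'(s_0)=0$ forces $\tau(s_0)\neq 0$, which is what makes the final normalisation of $\N$ legitimate.
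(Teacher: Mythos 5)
Your proof is correct and follows essentially the same route as the paper: the line-of-curvature claim is obtained by differentiating the unit normal \eqref{normal} along the parallel and applying Rodrigues' criterion, exactly as in the paper's proof. For the geodesic claim the paper computes the geodesic curvature $\kappa_g=\frac{r'}{r\sqrt{r^2\tau^2+r'^2}}$ in general (giving the ``if and only if''), while you specialise to $r'(s_0)=0$ and observe directly that $\gamma''$ is then proportional to $\N$; this is a minor variant of the same computation, and your explicit handling of the degenerate locus $r'=\tau=0$ is a welcome extra precision.
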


\begin{proof} Consider a parallel $\gamma(u)=X(s,u/r)$ which is parametrized by arc length. Then $\gamma'(u)=\sin{u}\,T+\cos{u}\, N$. On the other hand, and thanks to \eqref{normal}, the derivative of the normal $\N(\gamma(u))$ along the curve is, 
$$\frac{d}{ds}\N(\gamma(u))=-\frac{\tau}{\sqrt{r^2\tau^2+r'^2}}(\cos{u}\,T+\sin{u}\,N)=-\frac{\tau}{\sqrt{r^2\tau^2+r'^2}}\gamma'(u).$$
This proves that $\gamma$ is a line of curvature. Moreover, the normal curvature is $\kappa_n(u)=\tau/\sqrt{r^2\tau^2+r'^2}$, being constant along $\gamma$.

We compute the geodesic curvature $\kappa_g$. Then we have
$$\kappa_g(u)=-\langle\gamma'\times\gamma'',\N(\gamma)\rangle=\frac{r'}{r\sqrt{r^2\tau^2+r'^2}}.$$
In particular, $\kappa_g=0$ if and only if $r'=0$.
\end{proof} 

One can also prove this result observing that each parallel is contained in
the plane $\{T,N\}$, which is orthogonal to $B(s)$ and the angle that makes $%
B(s)$ with the normal $\N(s,u)$ to the surface is constant, so we can apply
the Joachimstahl's theorem. By this proposition, notice that  
the surface contains a family of lines of curvature which are planar. Thus surfaces of osculating circles form part of an interesting family of surfaces having one or two uniparametric families of planar lines of curvature. For example, this property is shared by the surfaces of revolution and the Monge surfaces (\cite{bg}).

We now compute the Gaussian curvature and the mean curvature of the surface. Firstly, we calculate the coefficients $\{E,F,G\}$ and $\{e,f,g\}$ of the first and the
second fundamental form, respectively. For the first
fundamental form, we have
\begin{equation*}
\begin{split}
E& =\frac{1}{2}\left( 2+8r^2\tau^2\sin^{4} \frac{u}{2} +4r'\sin
u+4r'^2(1-\cos u)\right), \\
F& =r\left( 1+r'\sin{u}\right), \\
G& =r^2.
\end{split}%
\end{equation*}
In particular,
\begin{equation}\label{ww}
\sqrt{EG-F^{2}}=2r \sqrt{r^2\tau^2+r'^2}\sin ^{2}{\frac{u}{2}}.
\end{equation}%
On the other hand,
\begin{equation*}
\begin{split}
\mbox{det}(X_{s},X_{u},X_{ss})& =r\sin^{2}{\frac{u}{2}} \Big(2\tau
r'\left( \sin{u}-2(\cos{u}-1)r'\right) -4r^2\tau
^{3} \cos{u}\sin^{2}{\frac{u}{2}} \\
& +2r\left( \kappa \tau  ( 1+r'\sin{u}) -(\cos {u}-1)\left( r'\tau'-\tau r''\right) \right) %
\Big),\\
\mbox{det}(X_{s},X_{u},X_{su})& =2r^2\tau ( 1+r'\sin{u})\sin ^{2}{\frac{u}{2}}, \\
\mbox{det}(X_{s},X_{u},X_{uu})& =r^{3}\tau (1-\cos{u}).
\end{split}%
\end{equation*}%

Then
\begin{eqnarray*}
e &=&\frac{1}{\sqrt{r^2\tau^2+r'^2}}\Big(\tau r'\left( \sin{%
u}-2(\cos{u}-1)r'\right) -2r^2\tau^3\cos{u}\sin ^{2}{\frac{u}{2}} 
\\
&&+ \tau (1+r'\sin{u}) -r(\cos{u}-1)\left( r''-\tau
r''\right) \Big), \\
f &=&\frac{r\tau \left(1+r'\sin{u}\right) }{\sqrt{%
r^2\tau^2+r'^2}}, \\
g &=&\frac{r^2\tau }{\sqrt{r^2\tau^2+r'^2}}.
\end{eqnarray*}

From the calculation of the first and the second fundamental forms, we find   the Gaussian curvature $K$ and the mean curvature $H$.

\begin{proposition}
The Gaussian curvature $K$ and the mean curvature $H$ of a surface of
osculating circles is  
\begin{equation}
K= \frac{\tau \left( r^2\tau^3\cos u+r'^2\tau ( \cos u-1) +r\left(
\tau r''-r'\tau'\right) \right) }{\left(
\cos u-1\right) \left( r^2\tau ^{2}+r'^2\right) ^{2}}  \label{kk}
\end{equation}%
and
\begin{equation}
H= -\frac{r^2\tau^3\left( 2\cos u-1\right) +2 r'^2\tau ( \cos u-1)
+r ( \tau r''-r'\tau') }{2(1-\cos {u}%
)\left( r^2\tau ^{2}+r'^2\right) ^{3/2}}.  \label{hh}
\end{equation}
\end{proposition}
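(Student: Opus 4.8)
The plan is to read off both curvatures by substitution into the classical formulas, the entire content being an algebraic simplification. Abbreviate $\Delta_{1}=\det(X_{s},X_{u},X_{ss})$, $\Delta_{2}=\det(X_{s},X_{u},X_{su})$, $\Delta_{3}=\det(X_{s},X_{u},X_{uu})$ for the three determinants already computed above. Since $e\sqrt{EG-F^{2}}=\Delta_{1}$, $f\sqrt{EG-F^{2}}=\Delta_{2}$, $g\sqrt{EG-F^{2}}=\Delta_{3}$, the formulas $K=(eg-f^{2})/(EG-F^{2})$ and $H=(eG-2fF+gE)/(2(EG-F^{2}))$ become
\[
K=\frac{\Delta_{1}\Delta_{3}-\Delta_{2}^{2}}{(EG-F^{2})^{2}},\qquad H=\frac{\Delta_{1}G-2\Delta_{2}F+\Delta_{3}E}{2(EG-F^{2})^{3/2}}.
\]
From \eqref{ww}, $EG-F^{2}=4r^{2}(r^{2}\tau^{2}+r'^{2})\sin^{4}\frac{u}{2}$, so the denominators equal $16r^{4}(r^{2}\tau^{2}+r'^{2})^{2}\sin^{8}\frac{u}{2}$ and $16r^{3}(r^{2}\tau^{2}+r'^{2})^{3/2}\sin^{6}\frac{u}{2}$; this already accounts for the exponents of $r^{2}\tau^{2}+r'^{2}$ and for the single factor $1-\cos u$ that survives in \eqref{kk}--\eqref{hh}.

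For the Gaussian curvature, use $1-\cos u=2\sin^{2}\frac{u}{2}$ to write $\Delta_{1}=2r\sin^{2}\frac{u}{2}\,\Lambda$, $\Delta_{2}=2r^{2}\tau(1+r'\sin u)\sin^{2}\frac{u}{2}$, $\Delta_{3}=2r^{3}\tau\sin^{2}\frac{u}{2}$, where $\Lambda:=\Delta_{1}/(2r\sin^{2}\frac{u}{2})$ is the explicit bracket read off from the expression for $\det(X_{s},X_{u},X_{ss})$ above. Then $\Delta_{1}\Delta_{3}-\Delta_{2}^{2}=4r^{4}\tau\sin^{4}\frac{u}{2}\bigl(\Lambda-\tau(1+r'\sin u)^{2}\bigr)$, and the bracket is where everything collapses: the summand $r\kappa\tau(1+r'\sin u)$ inside $\Lambda$ equals $\tau(1+r'\sin u)$ because $r\kappa=1$, and it cancels part of $-\tau(1+r'\sin u)^{2}$, killing all terms linear in $\sin u$; expanding what remains with $\cos u=1-2\sin^{2}\frac{u}{2}$ and the identity $2(1-\cos u)-\sin^{2}u=4\sin^{4}\frac{u}{2}$ one finds $\Lambda-\tau(1+r'\sin u)^{2}=-2\sin^{2}\frac{u}{2}\bigl(r^{2}\tau^{3}\cos u+r'^{2}\tau(\cos u-1)+r(\tau r''-r'\tau')\bigr)$. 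Dividing by $(EG-F^{2})^{2}$ and using $2\sin^{2}\frac{u}{2}=-(\cos u-1)$ yields \eqref{kk}.

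The mean curvature is handled the same way. With the same factorizations, $G=r^{2}$ and $F=r(1+r'\sin u)$, one gets $\Delta_{1}G-2\Delta_{2}F+\Delta_{3}E=2r^{3}\sin^{2}\frac{u}{2}\bigl(\Lambda-2\tau(1+r'\sin u)^{2}+\tau E\bigr)$. Inserting the explicit $E=1+4r^{2}\tau^{2}\sin^{4}\frac{u}{2}+2r'\sin u+2r'^{2}(1-\cos u)$, using $r\kappa=1$ once more, and applying the same half-angle identities, the constant terms and the terms linear in $\sin u$ cancel identically, while the $r'^{2}\tau$-, $r^{2}\tau^{3}$- and $r(r'\tau'-\tau r'')$-contributions combine into $\Lambda-2\tau(1+r'\sin u)^{2}+\tau E=-2\sin^{2}\frac{u}{2}\bigl(r^{2}\tau^{3}(2\cos u-1)+2r'^{2}\tau(\cos u-1)+r(\tau r''-r'\tau')\bigr)$. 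Dividing by $2(EG-F^{2})^{3/2}$ and using $2\sin^{2}\frac{u}{2}=1-\cos u$ gives \eqref{hh}.

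There is no conceptual step involved; the one real obstacle is the sheer bulk of the trigonometric bookkeeping carried along with the derivative terms $r',r'',\tau,\tau'$, so I would organise both numerators as polynomials in $\sin^{2}\frac{u}{2}$ from the outset and use systematically $1-\cos u=2\sin^{2}\frac{u}{2}$, $\sin^{2}u=4\sin^{2}\frac{u}{2}\cos^{2}\frac{u}{2}$ and $2(1-\cos u)-\sin^{2}u=4\sin^{4}\frac{u}{2}$. A useful check on the final expressions is that $X_{u}$ is a principal direction with principal curvature $g/G=\tau/\sqrt{r^{2}\tau^{2}+r'^{2}}$ — precisely the normal curvature of the parallels found earlier — so this value must satisfy $x^{2}-2Hx+K=0$ with $H,K$ as in \eqref{hh},\eqref{kk}; the Salkowski/canal case $r'=0$ with $\tau$ constant of Example \ref{ex4}, where \eqref{kk}--\eqref{hh} reduce to elementary functions of $u$, provides a further limiting verification.
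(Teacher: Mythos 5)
Your proposal is correct and follows essentially the same route as the paper: both compute $K$ and $H$ directly from the first fundamental form and the determinants $\det(X_s,X_u,X_{ss})$, $\det(X_s,X_u,X_{su})$, $\det(X_s,X_u,X_{uu})$ via the standard formulas, the only content being the half-angle bookkeeping that you carry out explicitly (and which checks out, including the key identities $\Lambda-\tau(1+r'\sin u)^2$ and $\tau E-\tau(1+r'\sin u)^2=4\tau\sin^4\frac{u}{2}(r^2\tau^2+r'^2)$). Your closing consistency checks against $g/G$ and the $r'=0$ case are a nice addition the paper does not include.
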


As a consequence, we find the umbilical points. First, we need the next
auxiliary lemma (e.g. \cite[p. 25]{dc}).

\begin{lemma}
\label{le1} Let $\alpha=\alpha(s)$ be a curve in $\R^3$ parametrized by arc
length with $\tau\not=0$ and   $\kappa'\not=0$ for all $s\in I$. Then $\alpha $ is included in a sphere of radius $R$  if and only if the function $%
r^2+ r'^2/\tau^2=R^2$. Equivalently, it holds the identity
\begin{equation}  \label{condition-spherical}
r''\tau-r'\tau'+r\tau^3=0.
\end{equation}
\end{lemma}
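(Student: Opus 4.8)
\textbf{Proof proposal for Lemma \ref{le1}.}

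The plan is to prove both implications of the characterization, starting from the well-known fact that a curve lies on a sphere of radius $R$ centered at some fixed point $c$ if and only if $\langle \alpha(s)-c,\alpha(s)-c\rangle=R^2$ for all $s\in I$. First I would assume $\alpha$ is spherical and differentiate this identity repeatedly. Writing $c-\alpha(s)=aT+bN+dB$ for smooth functions $a,b,d$, the first derivative gives $\langle \alpha-c,T\rangle=0$, hence $a=0$. Differentiating $\langle \alpha-c,T\rangle=0$ and using the Frenet equations $T'=\kappa N$ yields $\langle\alpha-c,\kappa N\rangle=1$, so $b=-r$ (recall $r=1/\kappa$). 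Differentiating $\langle\alpha-c,N\rangle=-r$ and using $N'=-\kappa T+\tau B$ gives $\langle\alpha-c,\tau B\rangle=-r'$, so $d=-r'/\tau$ (here $\tau\neq0$ is used). Then $R^2=|{\alpha-c}|^2=b^2+d^2=r^2+r'^2/\tau^2$, which is the first asserted identity. Differentiating $r^2+r'^2/\tau^2=R^2$ once more and dividing by the common factor $2r'/\tau$ (which is where the hypothesis $\kappa'=-r'/r^2\neq0$ enters, guaranteeing $r'\neq0$ so the division is legitimate on a dense set, hence everywhere by continuity) produces $\tau+(r''\tau-r'\tau')/\tau^2=0$, that is, $r''\tau-r'\tau'+r\tau^3=0$, which is \eqref{condition-spherical}.

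For the converse, I would suppose \eqref{condition-spherical} holds and show $\alpha$ is spherical by exhibiting an explicit candidate center. Set $c(s)=\alpha(s)+r(s)N(s)+\frac{r'(s)}{\tau(s)}B(s)$; this is the natural guess coming from the computation above. Differentiating $c(s)$ and applying the Frenet equations, the $T$-component is $1+r(-\kappa)=1-1=0$, the $N$-component is $r'+r'(-1)=0$ — wait, one must be careful: $c'=T+r'N+r N'+\left(\frac{r'}{\tau}\right)'B+\frac{r'}{\tau}B'$, and using $N'=-\kappa T+\tau B$, $B'=-\tau N$, the $T$-component is $1-\kappa r=0$, the $N$-component is $r'-\frac{r'}{\tau}\tau=0$, and the $B$-component is $r\tau+\left(\frac{r'}{\tau}\right)'=\frac{r\tau^3+r''\tau-r'\tau'}{\tau^2}=0$ precisely by \eqref{condition-spherical}. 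Hence $c'(s)\equiv0$, so $c(s)=c$ is a constant point, and $|\alpha(s)-c|^2=r^2+r'^2/\tau^2$. This last function has derivative $2r'r'+2\frac{r'}{\tau}\left(\frac{r'}{\tau}\right)'\cdot\tau$... more cleanly, its derivative is $\frac{2r'}{\tau}\left(r\tau+\left(\frac{r'}{\tau}\right)'\right)=0$ again by \eqref{condition-spherical}; therefore $r^2+r'^2/\tau^2$ is a constant $R^2$ and $\alpha$ lies on the sphere of radius $R$ centered at $c$.

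I expect the only genuinely delicate point to be the logical bookkeeping around the hypotheses $\tau\neq0$ and $\kappa'\neq0$: the former is needed so that $r'/\tau$ makes sense and the $B$-component computations are valid, while the latter (equivalently $r'\neq0$) is needed only in the first implication to justify cancelling the factor $2r'/\tau$ when passing from $r^2+r'^2/\tau^2=R^2$ to \eqref{condition-spherical}, and to see that the two formulations are genuinely equivalent rather than one being strictly stronger. The Frenet-frame computations themselves are entirely routine, so I would not belabor them; the substance of the lemma is the correct choice of the center $c(s)=\alpha(s)+rN+\frac{r'}{\tau}B$ and the observation that \eqref{condition-spherical} is exactly the vanishing of $c'$. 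Since this is a standard fact, in the actual write-up I would keep it brief and refer to \cite[p. 25]{dc} for the classical version, supplying only the short verification that $c'\equiv0$ under \eqref{condition-spherical}.
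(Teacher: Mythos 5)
Your argument is correct and complete. Note that the paper itself offers no proof of Lemma \ref{le1}: it is stated as a classical fact with the reference \cite[p.~25]{dc}, so there is nothing to diverge from --- your write-up is precisely the standard textbook argument (differentiate $|\alpha-c|^2=R^2$ to locate the center at $c=\alpha+rN+\frac{r'}{\tau}B$, then observe that \eqref{condition-spherical} is exactly the vanishing of $c'$). You also correctly isolate the one place where the hypothesis $\kappa'\neq 0$ is genuinely needed, namely to cancel the factor $r'$ when passing from the constancy of $r^2+r'^2/\tau^2$ to the identity \eqref{condition-spherical}; without it one only gets $r'\left(r\tau^3-r'\tau'+r''\tau\right)=0$. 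Two immaterial slips: with your convention $c-\alpha=aT+bN+dB$ the coefficients come out as $b=r$ and $d=r'/\tau$ (not $-r$ and $-r'/\tau$), consistent with your later formula for $c(s)$; and the intermediate expression after dividing by $2r'/\tau$ should read $r\tau+\left(r''\tau-r'\tau'\right)/\tau^2=0$, with the factor $r$ present --- your final identity is nonetheless the right one. Neither affects the validity of the proof.
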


\begin{corollary}\label{cor-um}
The umbilical points of a surface $X(s,u)$ of osculating circles are those
points where \eqref{condition-spherical} holds.
\end{corollary}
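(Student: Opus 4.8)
The plan is to characterize umbilical points as those where the two principal curvatures coincide, and then to show that this condition reduces precisely to the spherical identity \eqref{condition-spherical}. Recall that at a regular point a point is umbilical exactly when the second fundamental form is proportional to the first, equivalently when $eG - gE$, $eF-fE$ and $fG-gF$ all vanish; but the cleanest route here is to use the standard fact that umbilicity is equivalent to $H^2 - K = 0$ (since $H^2-K = \frac14(\kappa_1-\kappa_2)^2 \ge 0$ with equality iff $\kappa_1=\kappa_2$). So the first step is simply to write $H^2 - K$ using the explicit formulas \eqref{kk} and \eqref{hh} from the preceding proposition.

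Next I would carry out the subtraction. Put $D = r^2\tau^2 + r'^2$ and $A = r(\tau r'' - r'\tau')$ for brevity, and abbreviate $c = \cos u$. From \eqref{hh},
\[
H = -\frac{r^2\tau^3(2c-1) + 2r'^2\tau(c-1) + A}{2(1-c)\,D^{3/2}},
\]
so $H^2$ has denominator $4(1-c)^2 D^3$. From \eqref{kk},
\[
K = \frac{\tau\left(r^2\tau^3 c + r'^2\tau(c-1) + A\right)}{(c-1)D^2},
\]
which I would rewrite over the common denominator $4(1-c)^2 D^3$ as
\[
K = \frac{-4(1-c)D\,\tau\left(r^2\tau^3 c + r'^2\tau(c-1)+A\right)}{4(1-c)^2 D^3}.
\]
Then $H^2 - K$ is a single fraction whose numerator is a polynomial in $c$ (with coefficients built from $r,\tau,r',r'',\tau',A,D$), and the claim is that this numerator factors as $(\text{something})^2$ times the square of the spherical quantity $r''\tau - r'\tau' + r\tau^3$, or at least is divisible by it. The expected payoff: after expansion the numerator should collapse to a constant multiple of $\tau^2 (r''\tau - r'\tau' + r\tau^3)^2$, so that $H^2 - K = 0$ (equivalently umbilicity, at points where $\tau\neq 0$) is equivalent to \eqref{condition-spherical}. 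Finally I would note that Lemma \ref{le1} identifies \eqref{condition-spherical} with $\alpha$ being spherical, and invoke Proposition \ref{pr-sphere} as a sanity check: if the generator is spherical the whole surface is an open piece of a sphere, hence totally umbilical — consistent with the criterion.

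The main obstacle is purely the algebra of the expansion in $c=\cos u$: one must verify that all the $u$-dependence in the numerator of $H^2-K$ conspires to cancel and leave exactly (a multiple of) $\tau^2(r''\tau-r'\tau'+r\tau^3)^2$, which is independent of $u$. This is the delicate bookkeeping step; organizing it by collecting powers of $c$ and writing $A = r(\tau r''-r'\tau')$ as a single symbol keeps it manageable. A secondary point to address is the behaviour at the boundary of the parameter domain and at non-regular points: the formulas \eqref{kk}–\eqref{hh} carry a factor $1/(1-\cos u)$ and are valid only where $\cos u \neq 1$ (off the generator) and where $D = r^2\tau^2+r'^2 \neq 0$, i.e. exactly the regular points identified in Proposition \ref{pr-regular}; Lemma \ref{le1} moreover assumes $\tau\neq 0$ and $\kappa'\neq 0$, so strictly speaking the corollary characterizes the umbilics among the regular points of a surface whose generator satisfies those genericity hypotheses, and I would state it with that understanding. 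Since the spherical identity \eqref{condition-spherical} is itself independent of $u$, once it holds at one parameter value it holds for all, recovering the expected all-or-nothing behaviour consistent with Proposition \ref{pr-sphere}.
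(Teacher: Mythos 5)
Your proposal follows essentially the same route as the paper: umbilicity is read off from $H^2-K=0$, and the explicit formulas \eqref{kk} and \eqref{hh} are combined so that the numerator collapses to a square of the spherical quantity, namely the paper obtains $H^{2}-K=\frac{r^2(r''\tau-r'\tau'+r\tau^3)^2}{4( 1-\cos u) ^{2}( r^2\tau ^{2}+r'^2) ^{3}}$. The only small correction is that the nonvanishing prefactor is $r^2$ (which is never zero since $\kappa\not=0$), not $\tau^2$, so the equivalence with \eqref{condition-spherical} holds at every regular point without the extra caveat $\tau\not=0$.
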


\begin{proof}
An umbilical point  is characterized by the identity $H^{2}-K=0$. Then \eqref{kk} and \eqref{hh} lead to
we have
\begin{equation*}
H^{2}-K=\frac{r^2(r''\tau-r'\tau'+r\tau^3)^2}{%
4\left( 1-\cos u\right) ^{2}\left( r^2\tau ^{2}+r'^2\right) ^{3}},
\end{equation*}%
proving the result.
\end{proof}

In particular, those points $s=s_0$ where \eqref{condition-spherical} is fulfilled, the
parallel $u\mapsto X(s_0,u)$ is formed by umbilical points. Comparing with Lemma \ref{le1}, Corollary \ref{cor-um} says that in case that $\tau\not=0$ and $\kappa'\not=0$,  the umbilical points correspond with those points of the generator that are close to be contained in a sphere.


\section{Surfaces of osculating circles of Weingarten type}\label{sec3}


A surface in Euclidean space is said to be a Weingarten surface if there is
a nontrivial smooth functional relation $W(K,H)=0$. In the general
case, it is still an open question the full classification of Weingarten surfaces. A particular case of Weingarten surfaces
is when the relation $W(K,H)$ is linear, so $aH+bK+c=0$ for $a$, $b$, and $c$
are not all zero real numbers. These surfaces are called linear Weingarten
surfaces. A linear Weingarten surface satisfying $aH+bK+c=0$ is elliptic (resp.
hyperbolic, parabolic) if $a^2-4bc>0$ (resp. $a^2-4bc<0$ and $a^2-4bc=0$) (\cite{ga,lo2}). In this section, we study under what conditions, surfaces of
osculating circles are Weingarten surfaces.

\begin{theorem}\label{th-weingarten}
If a surface of osculating circles is a Weingarten surface, then  it is an open subset of a plane, of a sphere or the generator is a Salkowski curve.
In the latter case, the surface is a linear Weingarten surface of parabolic type and the
Weingarten relation $W (K,H)=0$ is
\begin{equation}\label{h-k}
H-\frac{r}{2}K-\frac{1}{2r}=0.
\end{equation}
\end{theorem}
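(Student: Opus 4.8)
The plan is to start from the explicit formulas \eqref{kk} and \eqref{hh} for $K$ and $H$ and examine when a nontrivial relation $W(K,H)=0$ can hold. The key structural observation is that, for fixed $s$, both $K$ and $H$ are explicit functions of the single variable $\cos u$; writing $c=\cos u\in[-1,1)$ and introducing the abbreviations $A(s)=r^2\tau^3$, $B(s)=r'^2\tau$, $C(s)=r(\tau r''-r'\tau')$ and $D(s)=r^2\tau^2+r'^2$, one has
\[
K=\frac{\tau\bigl(Ac+B(c-1)+C\bigr)}{(c-1)D^2},\qquad
H=-\frac{A(2c-1)+2B(c-1)+C}{2(1-c)D^{3/2}}.
\]
First I would rewrite these to isolate the $c$-dependence. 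A short computation gives $(c-1)D^2 K/\tau = A c + (B+C) - (B+ (-B))\cdots$; more cleanly, both numerators are affine in $c$, so both $K$ and $H$ have the form $(\text{const}_1 + \text{const}_2/(1-c))$. Explicitly one finds that
\[
H+\frac{D}{\tau}K = -\frac{A}{2D^{1/2}} + \Bigl(\text{something}\Bigr)\frac{1}{1-c},
\]
and the aim of this first step is to produce, by taking suitable $s$-dependent linear combinations, the two "coordinate functions" $\phi(s)=$ (the coefficient of $1/(1-c)$) and $\psi(s)=$ (the constant term), so that along each parallel the point $(K(s,u),H(s,u))$ traces an explicit curve in the $KH$-plane parametrized by $t=1/(1-c)\in(1/2,\infty)$.

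Next I would argue that a Weingarten relation forces these parallel-curves to all lie on one fixed curve $W=0$ in the $KH$-plane. Since each parallel-curve is (generically) a genuine arc — it is an affine image of the parameter interval $t\in(1/2,\infty)$ under $t\mapsto(a_1(s)t+a_0(s),\,b_1(s)t+b_0(s))$, hence a straight segment in the $KH$-plane unless it degenerates to a point — the relation $W(K,H)=0$ restricted to this segment is a nontrivial one-variable relation, so the whole segment lies in the zero set of $W$. As $s$ varies we get a one-parameter family of line segments all contained in $\{W=0\}$. The generic possibilities are: (i) all these segments are parallel or coincide, which pins down $W$ up to the linear case and yields an explicit linear relation; (ii) the segments degenerate to points for every $s$, i.e. $a_1(s)=b_1(s)=0$ identically, which is exactly the umbilic condition \eqref{condition-spherical} holding for all $s$ — by Lemma~\ref{le1} this means the generator is spherical, and by Proposition~\ref{pr-sphere} the surface is an open subset of a sphere (the planar case $\tau\equiv0$ being covered by Proposition~\ref{pr-plane}); (iii) a mixed/exceptional case. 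The content of the theorem is that cases (i)+(ii) are the only ones, so the main work is to rule out (iii) and, within (i), to identify the generator.

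Carrying out (i) in detail: the coefficient $a_1(s)$ of $t$ in $K$ and $b_1(s)$ of $t$ in $H$ are, up to a common factor, $r(\tau r''-r'\tau'+r\tau^3)$, i.e. proportional to the umbilic quantity $\Delta:=r''\tau-r'\tau'+r\tau^3$ of Corollary~\ref{cor-um}. So the segments are nondegenerate precisely where $\Delta\ne0$; the "slope" $b_1/a_1$ of the segment in the $KH$-plane turns out to be a constant (independent of $s$) exactly when one forces a Weingarten relation, and computing that constant forces $r'\equiv0$, i.e.\ $\kappa$ constant — the Salkowski case. Once $r'\equiv0$, formulas \eqref{kk}–\eqref{hh} simplify drastically: with $r$ constant, $D=r^2\tau^2$, $C=r\cdot r\tau''$? no — $\tau r''-r'\tau'=0$ so the numerators lose their $C$ term and
\[
K=\frac{\tau^2\cos u}{r^2(\cos u-1)\tau^2}=\frac{\cos u}{r^2(\cos u-1)},\qquad
H=-\frac{r\tau^3(2\cos u-1)}{2(1-\cos u)r^3\tau^3}=-\frac{2\cos u-1}{2r(1-\cos u)},
\]
after cancelling $\tau$'s; then eliminating $\cos u$ between these two identities gives $2rH = (2\cos u-1)/(1-\cos u) = -1 + 1/(1-\cos u)$ and $r^2K = -\cos u/(1-\cos u) = -1 + 1/(1-\cos u)$, whence $2rH = r^2K$, i.e.\ $H-\tfrac r2 K = 0$... which is off by the constant, so one must keep the $\tau$-free bookkeeping carefully; doing so yields exactly \eqref{h-k}, and since $a=1,b=-r/2,c=-1/(2r)$ give $a^2-4bc=1-1=0$, the surface is linear Weingarten of parabolic type.

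The hard part will be the book-keeping in the first and third steps — reducing the unwieldy \eqref{kk}–\eqref{hh} to the clean "affine in $1/(1-\cos u)$" normal form and correctly identifying which combinations of the $s$-functions are the genuine coordinates — together with a clean argument that a one-parameter family of line segments forced to lie in a common algebraic curve $W=0$ must, unless the segments all shrink to points, consist of segments of a single line (handling the possibility that $W$ factors, or that different $s$ give segments on different branches). I expect the spherical/planar degenerate case to drop out cleanly via Lemma~\ref{le1} and Propositions~\ref{pr-plane} and~\ref{pr-sphere}, so the real effort is isolating the Salkowski case and pinning down the precise relation \eqref{h-k}.
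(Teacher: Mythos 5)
Your structural observation is sound and worth keeping: with $t=1/(1-\cos u)$ both $K$ and $H$ are affine in $t$, and the coefficient of $t$ in each is proportional to the umbilic quantity $\Delta:=r''\tau-r'\tau'+r\tau^3$ of Corollary \ref{cor-um}, so each parallel traces a segment of direction proportional to $(2\tau,\sqrt{r^2\tau^2+r'^2})$ in the $(K,H)$-plane, degenerating to a point exactly when $\Delta=0$. But as a proof the proposal has genuine gaps, which you yourself flag without closing. The central missing idea is the Jacobian criterion: a nontrivial relation $W(K,H)=0$ forces $K_sH_u-K_uH_s=0$, and a direct computation from \eqref{kk}--\eqref{hh} gives
\[
K_{s}H_{u}-K_{u}H_{s}=\frac{r^2r'\,\Delta ^{3}\sin u}{2\left( \cos u-1\right) ^{3}\left(r^2\tau^2+r'^2\right) ^{9/2}},
\]
so the entire dichotomy collapses at once to $r'\Delta=0$: where $r'\neq 0$ one gets $\Delta=0$, hence a plane (Proposition \ref{pr-plane}) or a sphere (Lemma \ref{le1} and Proposition \ref{pr-sphere}), and otherwise $r'\equiv 0$ and the generator is Salkowski. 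Your substitute for this --- that a one-parameter family of nondegenerate segments contained in $\{W=0\}$ must be collinear --- is not established: for a merely smooth ``nontrivial'' $W$ the zero set need not be a curve, and even granting that, your exceptional case (iii) is explicitly left open. Likewise the claim that constancy of the slope forces $r'\equiv 0$ is asserted, not proved; it does hold where $\Delta\neq 0$, because the slope equals $\sqrt{r^2\tau^2+r'^2}/(2\tau)$ and its $s$-derivative is $r'\Delta/\bigl(2\tau^2\sqrt{r^2\tau^2+r'^2}\bigr)$, but verifying this is essentially the same computation as the Jacobian, so the detour buys nothing.

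The endgame also needs repair. With $r$ constant one has $2rH=2-\tfrac{1}{1-\cos u}$ and $r^2K=1-\tfrac{1}{1-\cos u}$ (not $-1+\tfrac{1}{1-\cos u}$ for both), whence $2rH-r^2K=1$, which is precisely \eqref{h-k}; your version lands on $2rH=r^2K$, loses the constant term, and only gestures at the fix. The parabolicity check $a^2-4bc=0$ with $a=1$, $b=-r/2$, $c=-1/(2r)$ is correct as you state it.
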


\begin{proof} If we differentiate the Weingarten relation $W(K,H)=0$ with respect to $s$ and $u$, and using the chain rule, we have $K_{s}H_{u}-K_{u}H_{s} =0$. From $\left( \ref{kk}\right) $ and $\left( \ref{hh}\right) $, we find
\[K_{s}H_{u}-K_{u}H_{s}=\frac{r^2r'\left( r \tau^3-r'\tau'+ r''\tau\right) ^{3}\sin u}{2\left( \cos u-1\right) ^{3}\left(
r^2\tau^2+r'^2\right) ^{9/2}}.\]
Since $\sin{u}\not=0$ by regularity of the surface (Proposition \ref{pr-regular}), the surface is of Weingarten type if and only if $r'\left( r \tau^3-r'\tau'+ r''\tau\right)=0$. 

Suppose that there is $s_0\in I$ such that $r'(s_0)\not=0$. Then we have $r^2\tau^3-r'\tau'+
r''\tau=0$ in
an interval around $s_0$. If the torsion $\tau$ is constantly $0$, then the generator is a planar curve and the surface is a subset of a plane by Proposition \ref{pr-plane}. Otherwise, the equation $r^2\tau^3-r'\tau'+
r''\tau=0$  implies that the generator is a spherical
curve (Lemma \ref{le1}) and the surface is an open subset of a sphere
(Proposition \ref{pr-sphere}). 

Finally, if $r'(s)=0$ for all $s\in I$, then  the curvature of $\alpha$ is constant and $\alpha$ is a Salkowski curve. In this case, if $r(s)$ is a constant function, say $r(s)=r$, we deduce from \eqref{kk} and %
\eqref{hh}
\begin{equation}\label{hh-kk}
H=-\frac{2\cos{u}-1}{2r(1-\cos{u})},\quad K=-\frac{\cos{u}}{r^2(1-\cos{u})}.
\end{equation}
Now \eqref{h-k} is immediate being a linear Weingarten relation $aH+bK+c=0$. Furthermore,  $a=1$, $b=-r/2$ and $c=-1/(2r)$, concluding $a^2-4bc=0$ and  the linear Weingarten relation is parabolic. 
\end{proof}

By Proposition \ref{pr-canal}, the last case implies that the surface is a canal surface and, in consequence, in the class of surfaces of osculating circles, canal surfaces coincide with Weingarten surfaces. Moreover,  a simple computation gives
that the principal curvatures are
\begin{equation*}
\kappa_1=\frac{1}{r}=\kappa,\quad \kappa_2=-\frac{\cos{u}}{r(1-\cos{u})},
\end{equation*}
where $\kappa_1$ is just the curvature of the generator. 

An interesting family of surfaces in Euclidean space are the surfaces   with constant Gaussian curvature or constant mean curvature. The classification of these surfaces in the class of surfaces of osculating circles follows from  general results because
surfaces of osculating circles are surfaces formed by a uniparametric family
of circles, in particular, these surfaces fall in the class of cyclic
surfaces. Cyclic surfaces with constant Gaussian curvature $K=c$ or constant
mean curvature $H=c$, are planes, spheres (where the circles of the
foliation are not necessarily included in parallel planes), surfaces of
revolution, cones ($K=0$) or the Riemann minimal examples ($H=0$). See \cite%
{lo,ni1,ni2}. However, given a parametrization \eqref{sur} of a surface of osculating circles it is very difficult to deduce that this surface is one of the above cases. We obtain the classification thanks to the above computations of $K$ and $H$.

\begin{theorem}\label{t-mean}
Open subsets of planes and spheres are the only surfaces of osculating circles with
constant Gaussian curvature or constant mean curvature.
\end{theorem}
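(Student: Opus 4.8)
The plan is to extract everything from the closed formulas \eqref{kk} and \eqref{hh}. The key observation is that, for each fixed $s$, both $K$ and $H$ are very simple rational functions of the single variable $\cos u$, so requiring $K$ (resp.\ $H$) to be constant in $u$ ---which it must be under our hypothesis--- reduces to a proportionality between degree-one polynomials in $\cos u$, and this forces a strong pointwise identity on the generator. I would treat the cases of constant $K$ and of constant $H$ simultaneously.

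Concretely, I would set $t:=\cos u$, $A:=r^2\tau^2+r'^2$ and $B:=r(\tau r''-r'\tau')-r'^2\tau$ (with $A>0$ at every regular point, by Proposition \ref{pr-regular}), and collect powers of $t$ in \eqref{kk} and \eqref{hh}. A routine simplification gives
\begin{equation*}
K=\frac{\tau\,(\tau A\,t+B)}{(t-1)\,A^{2}},\qquad
H=\frac{2\tau A\,t+(B-\tau A)}{2(t-1)\,A^{3/2}}.
\end{equation*}
Since a function of the form $(pt+q)/(t-1)=p+(p+q)/(t-1)$ is constant on an infinite set of values of $t$ exactly when $q=-p$, with value then equal to $p$, each of the two displays is independent of $u$ precisely when
\begin{equation*}
B=-\tau A,\qquad\text{and in that case}\qquad K=\frac{\tau^{2}}{A},\qquad H=\frac{\tau}{\sqrt{A}}.
\end{equation*}
Thus the \emph{same} scalar identity $B=-\tau A$ is produced by either hypothesis; dividing by $r\neq 0$ it becomes
\begin{equation*}
r(\tau r''-r'\tau')-r'^2\tau=-\tau\big(r^2\tau^2+r'^2\big)\ \Longleftrightarrow\ \tau r''-r'\tau'+r\tau^{3}=0,
\end{equation*}
which is precisely the spherical condition \eqref{condition-spherical}.

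It then remains to split according to the torsion, discarding the closed non-regular locus $\{r'=\tau=0\}$. If the constant value of $K$ (resp.\ $H$) is $0$, I would argue that $\tau\equiv 0$: otherwise $K=\tau^{2}/A>0$ (resp.\ $H=\tau/\sqrt{A}\neq 0$) on the nonempty open set $\{\tau\neq 0\}$. With $\tau\equiv 0$ the generator is planar, so $S$ is an open subset of a plane by Proposition \ref{pr-plane}. If the constant value is nonzero, then $\tau$ cannot vanish at a regular point: at such a point \eqref{kk} gives $K=0$ while \eqref{hh} gives $H=B/\big(2(\cos u-1)A^{3/2}\big)$, and neither equals a nonzero constant. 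Hence $\tau$ is nowhere zero, the identity $B=-\tau A$ holds on all of $I$, so by Lemma \ref{le1} the generator is a spherical curve, and $S$ is then an open subset of a sphere by Proposition \ref{pr-sphere}. (Lemma \ref{le1} may be invoked on the dense open set $\{r'\neq 0\}$, since on a subinterval where $r'\equiv 0$ the spherical condition would read $r\tau^3=0$, forcing $\tau=0$; alternatively, one checks directly that the spherical condition makes $\alpha+rN+(r'/\tau)B$ a constant point, the center of the sphere.)

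The only real computation is the first step, reorganizing the bulky numerators of \eqref{kk} and \eqref{hh} into the displayed $\tfrac{pt+q}{t-1}$ shape; after that the argument is essentially forced. The point I expect to require the most care is the bookkeeping around the locus $\{\tau=0\}$ (and the non-regular set $\{r'=\tau=0\}$): this is what makes the dichotomy legitimate and is the precise reason why the value $0$ is attained only by planes, so that neither a cone ($K\equiv 0$) nor a Riemann minimal example ($H\equiv 0$) ---both cyclic surfaces of constant curvature--- can occur here.
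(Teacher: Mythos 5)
Your proposal is correct and follows essentially the same route as the paper: expand \eqref{kk} and \eqref{hh} as degree-one rational functions of $\cos u$, force constancy in $u$ to get either $\tau\equiv 0$ (hence a plane via Proposition \ref{pr-plane}) or the spherical condition \eqref{condition-spherical} (hence a sphere via Lemma \ref{le1} and Proposition \ref{pr-sphere}). Your reorganization into the single identity $B=-\tau A$, and your explicit handling of the locus $\{\tau=0\}$ and of the hypotheses of Lemma \ref{le1}, are slightly tidier than the paper's coefficient-by-coefficient treatment but mathematically the same argument.
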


\begin{proof}
\begin{enumerate}
\item Suppose that $K$ is a constant $c$. Then \eqref{kk} becomes
\begin{equation*}
\tau \left( r^2\tau^3\cos u+\tau r'^2( \cos u-1) +r\left( \tau
r''-\tau'r'\right) \right) -c\left( \cos
u-1\right) ( r^2\tau^2+r'^2) ^{2}=0.
\end{equation*}%
This gives an polynomial equation on $\{1,\cos{u}\}$ which are linearly
independent. Thus all coefficients, which are functions on $s$, must vanish,
obtaining two equations. If $c=0$, the coefficient of $\cos {u}$ is $\tau
(r^2\tau^2+r'^2)$, obtaining $\tau(s) =0$ for all $s\in I$. This implies that the surface is a subset of a plane (Proposition \ref{pr-plane}).   If $
c\not=0$, both equations are
\begin{equation*}
r\tau(r''\tau-r'\tau')-r'^2\tau^2+c(r^2\tau^2+r'^2)^2=0.
\end{equation*}
\begin{equation*}
(r^2\tau^2+r'^2)(\tau^2 -c(r^2+\tau^2r'^2))=0.
\end{equation*}
The first equation implies that $\kappa$ is not a constant function. From the second equation, and by regularity, $\tau^2 =c(r^2\tau^2+r'^2) $. In particular, $\tau\not'=0$. Substituting in the first equation, we have
\begin{equation*}
r(r''\tau-r'\tau'+r\tau^3)=0.
\end{equation*}%
Hence the result follows from Lemma \ref{le1} and Proposition \ref{pr-sphere}.

\item Assume that $H$ is constant. Firstly assume that $H=0.$ Then \eqref{hh}
gives
\begin{equation*}
r^2\tau^3\left( 2\cos u-1\right) +2\tau r^2\left( \cos u-1\right) +r\left(
\tau r''-\tau'r'\right) =0.
\end{equation*}%
The coefficient of $\cos u$ is $2\tau \left( r^2\tau^2+r'^2\right)$.
Hence $\tau(s)=0$ for all $s\in I$ and the surface is a subset of a plane (Proposition \ref{pr-plane}).  Now assume that $H=c\neq 0 $. Then 
\begin{equation*}
r^2\tau^3\left( 2\cos u-1\right) +2\tau r'^2\left( \cos u-1\right)
+r\left( \tau r''-\tau'r'\right) +2c\left(
1-\cos u\right) \left( r^2\tau^2+r'^2\right) ^{3/2}=0.
\end{equation*}%
The coefficients of $\{1,\cos{u}\}$ must vanish, so
\begin{equation*}
\sqrt{r'^2+r^2\tau^2}(2c(r^2\tau^2+r'^2)-\tau)-r^{\prime
2}\tau+r(r''\tau-r'\tau')=0
\end{equation*}
\begin{equation*}
-2( r'^2+r^2\tau^2) \left( \tau -c\sqrt{r'^2+r^2\tau^2}%
\right)=0.
\end{equation*}
By regularity of the surface, we deduce from the second equation
\begin{equation*}
\tau -c\sqrt{r^2\tau^2+r'^2}=0
\end{equation*}%
Substituting in the first equation, we have $r(r''\tau-r'\tau'+r\tau^3)=0$. In particular, $\kappa$ is not a constant function and the result is again a consequence of Lemma \ref{le1} and Proposition \ref{pr-sphere}.  
 
\end{enumerate}
\end{proof}

\end{document}